\documentclass[11pt]{article}
\usepackage{geometry}                % See geometry.pdf to learn the layout options. There are lots.
\geometry{letterpaper}                   % ... or a4paper or a5paper or ... 
\usepackage{graphicx}
\usepackage{amssymb}
\usepackage{amsthm}
\usepackage{epstopdf}
\DeclareGraphicsRule{.tif}{png}{.png}{`convert #1 `dirname #1`/`basename #1 .tif`.png}

\newtheorem{defn}{Definition}[section]
\newtheorem{result}[defn]{Result}
\newtheorem{const}[defn]{Construction}
\newtheorem{thm}[defn]{Theorem}

\title{Overlap Cycles for Steiner Quadruple Systems}
\author{Victoria Horan\thanks{\texttt{vhoran@asu.edu}} and Glenn Hurlbert\thanks{\texttt{hurlbert@asu.edu}}\\ School of Mathematics and Statistics \\ Arizona State Universiy \\ Tempe, AZ  85287 USA}
%\date{}                                           % Activate to display a given date or no date

\begin{document}
\maketitle

\begin{abstract}
	Steiner quadruple systems are set systems in which every triple is contained in a unique quadruple.  It is will known that Steiner quadruple systems of order $v$, or SQS($v$), exist if and only if $v \equiv 2, 4 \pmod 6$.  Universal cycles, introduced by Chung, Diaconis, and Graham in 1992, are a type of cyclic Gray code.  Overlap cycles are generalizations of universal cycles that were introduced in 2010 by Godbole.  Using Hanani's SQS constructions, we show that for every $v \equiv 2, 4 \pmod 6$ with $v > 4$ there exists an SQS($v$) that admits a 1-overlap cycle.  
\end{abstract}

\section{Introduction and Definitions}

A 3-$(v,4,1)$-design is known as a \textbf{Steiner quadruple system of order $v$}, or SQS$(v)$.  They may also be defined as a pair $(X, \mathcal{B})$ where $X$ is a set and $\mathcal{B}$ a collection of 4-element subsets of X called \textbf{quadruples}, with the property that any three points of $X$ are contained in a unique quadruple \cite{Cameron}.  The existence of Steiner quadruple systems was completely determined by H. Hanani in 1960.
\begin{thm}\label{general}
	\emph{\cite{Hanani}}  An SQS$(v)$ exists if and only if $v \equiv 2, 4 \pmod 6$.
\end{thm}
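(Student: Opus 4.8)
The plan is to prove the two directions separately, with the forward (necessity) direction an easy counting argument and the reverse (sufficiency) direction the substantial part.

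For necessity, suppose $(X,\mathcal{B})$ is an SQS$(v)$ and count point--block and pair--block incidences. Fixing a pair $\{x,y\}$, each of the $v-2$ triples extending it lies in a unique block, and each block on $\{x,y\}$ accounts for exactly two such triples, so the number of blocks through a pair is $(v-2)/2$; hence $v$ must be even. Fixing a single point $x$, the $(v-1)(v-2)/2$ triples through $x$ are distributed among the blocks on $x$, three triples per block, so the number of blocks through a point is $(v-1)(v-2)/6$; hence $6 \mid (v-1)(v-2)$, and since $(v-1)(v-2)$ is automatically even this forces $3 \nmid v$. Combining the conditions that $v$ is even and $v \not\equiv 0 \pmod 3$ gives exactly $v \equiv 2, 4 \pmod 6$, and one checks that the total block count $v(v-1)(v-2)/24$ is then automatically an integer.

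For sufficiency I would argue by strong induction on $v$, using base cases together with a small family of recursive constructions that build a larger SQS from one or two smaller ones. The trivial systems SQS$(2)$ (empty block set) and SQS$(4)$ (the single block $X$) start the induction, and the points and $2$-flats of the affine geometry $AG(n,2)$ supply an SQS$(2^n)$ directly: over $\mathbb{F}_2^n$ any three distinct points $a,b,c$ determine the fourth point $a+b+c$, and the quadruples $\{a,b,c,a+b+c\}$ cover every triple exactly once. The engine of the induction is a doubling construction producing an SQS$(2v)$ from an SQS$(v)$ (e.g.\ by placing two copies of the system on $X \times \{0,1\}$ and splicing them with a $1$-factorization-type rule), which stays inside the admissible set since doubling sends $2 \leftrightarrow 4 \pmod 6$.

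The main obstacle is that doubling alone only reaches orders that are powers of two, so the real content of Hanani's theorem is a set of additional recursive constructions whose images, together with doubling and the base cases, sweep out every residue class permitted by the necessity conditions. This requires (i) exhibiting constructions of the shape $v \mapsto 3v-2$ and the like that land in the classes $\equiv 2, 4 \pmod 6$ not produced by doubling, and (ii) a number-theoretic bookkeeping argument showing that iterating these operations from $\{2,4\}$ reaches all admissible $v$. For each such construction the genuinely delicate step is verifying the defining SQS property: one must show that every triple of the new ground set lies in exactly one new quadruple, which is done by a case analysis according to how the three points of the triple are distributed among the ``parts'' of the construction, checking in each case that the block rule assigns a unique completing point. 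I expect this case analysis, together with the design of constructions covering the orders $\equiv 10 \pmod{12}$ in particular, to be where essentially all of the difficulty lies.
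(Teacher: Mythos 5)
Your necessity direction is complete and correct: the double counts give $(v-2)/2$ blocks through each pair and $(v-1)(v-2)/6$ through each point, forcing $v$ even and $v\not\equiv 0\pmod 3$, which is exactly $v\equiv 2,4\pmod 6$. Your overall strategy for sufficiency --- base cases plus a family of recursive constructions plus a residue-class bookkeeping argument --- is also the right one; it is precisely Hanani's strategy, and it is the shape of the argument this paper reproduces in Theorem \ref{Hanani} (which uses six constructions, of respective shapes $v\mapsto 2v$, $3v-2$, $3v-8$, $3v-4$, $4v-6$, and $12v-10$, with hand-built base cases at orders $8$, $10$, $14$, and $38$, and a case split on $n$ modulo $12$, $18$, $24$, $36$, and $72$ to show every admissible order is reached).

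The gap is that for sufficiency you have written a plan rather than a proof, and the plan omits essentially all of the content. You name only the doubling construction and gesture at ``constructions of the shape $v\mapsto 3v-2$ and the like''; you do not exhibit the block rules for any construction beyond doubling and the affine geometry $AG(n,2)$, you do not carry out the triple-by-triple verification that any of them yields an SQS (you explicitly defer this, and it is where the difficulty lives --- Hanani's constructions require auxiliary pair systems and delicate modular block families, as Constructions \ref{3.3}--\ref{3.6} in this paper illustrate), and you do not perform the number-theoretic covering argument showing the operations generate all $v\equiv 2,4\pmod 6$ from the base cases. One smaller inaccuracy: doubling from the base cases SQS$(2)$ and SQS$(4)$ reaches all powers of two, but from a richer set of base cases it reaches orders $2^k m$, so the obstruction is not that doubling ``only reaches powers of two'' but that the admissible orders not of the form $2v$ with $v$ admissible (e.g.\ $v\equiv 2,10\pmod{12}$) need genuinely different constructions. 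As it stands the proposal correctly identifies the architecture of the proof but proves only the easy direction.
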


Ordering the blocks of a design and the points within its blocks is an important problem for many applications of design theory.  For example, in \cite{Disks} and \cite{Partial} the blocks of Steiner triple systems are ordered in a specific manner so as to produce efficient disk erasure correcting codes.  Other examples of these orderings include various Gray codes and universal cycles.  A \textbf{universal cycle} over a set of combinatorial objects represented as strings of length $n$ is an ordering that requires that the last $n-1$ letters of one string to match the first $n-1$ letters of its successor in the listing \cite{CDG}.

To prove Theorem \ref{general}, Hanani used six different recursive constructions and various base cases.  Using these constructions, we create 1-overlap cycles for each SQS$(v)$.  An \textbf{$s$-overlap cycle}, or \textbf{$s$-ocycle}, is a generalization of a universal cycle that relaxes the almost complete $n-1$ overlap between successive elements.  Instead of requiring that the last $n-1$ letters of one string match the first $n-1$ letters of its successor, an $s$-ocycle requires just the last $s$ letters of one to match the first $s$ letters of the next.  See \cite{Godbole} for some background on ocycles, including the construction of $s$-ocycles for $m$-ary words, and see \cite{OCSTS} for 1-ocycles and Steiner triple systems.  In this paper, we achieve the following result using Hanani's six recursive constructions.

\begin{thm}\label{main}
	For every $v \equiv 2, 4 \pmod 6$ with $v > 4$, there exists an SQS($v$) that admits a 1-ocycle.
\end{thm}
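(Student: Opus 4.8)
The plan is to recast the existence of a 1-ocycle as the existence of an Eulerian circuit in an auxiliary multigraph. Each quadruple is written as an ordered $4$-tuple, and the 1-overlap condition constrains only its first and last symbols, so I would associate to each quadruple $Q$ a choice of two of its four points to serve as the first and last symbols; the two interior points may then be placed arbitrarily. Orienting from first to last turns $Q$ into a directed edge, and reading a 1-ocycle in order corresponds exactly to traversing these edges so that the head of each is the tail of the next, i.e.\ to an Eulerian circuit of the resulting digraph $D$. Thus any orientation of the quadruples for which $D$ is connected on its support and balanced, $\deg^+(x)=\deg^-(x)$ for every $x$, produces a 1-ocycle. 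It is cleaner still to forget orientation: it suffices to select, from each quadruple, one unordered pair of its points so that the underlying multigraph $G$ (with one edge per quadruple) is connected on its support and has all degrees even. Any Eulerian circuit of $G$ then induces a balanced orientation, hence a 1-ocycle, since consecutive edges of the circuit meet at the shared symbol. The problem therefore becomes: choose one pair from each quadruple so that $G$ is even and connected.

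With this reformulation, I would argue by induction along Hanani's six recursive constructions, as the abstract indicates. First I would dispose of the finitely many base cases that seed the recursions by exhibiting an explicit connected even pairing (equivalently, a 1-ocycle) for each, by direct search if need be. For the inductive step I would treat the six constructions separately. In each, the quadruples of the assembled SQS$(V)$ partition into a few combinatorial types: ``internal'' blocks forming copies of the smaller ingredient systems, and ``cross'' or ``transversal'' blocks introduced by the construction. For the internal blocks I would import the even pairings supplied by the induction hypothesis, and for the cross blocks I would choose pairs using the product or group structure so that each vertex receives an even contribution. Because an SQS is regular (every point lies in the same number of quadruples), the parity bookkeeping is controlled by symmetry, and summing the contributions type by type keeps every vertex degree even.

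The genuine difficulty, and the main obstacle, is connectivity rather than parity: the internal copies are a priori vertex-disjoint or only loosely joined, so one must verify that the cross blocks wire the separate layers into a single connected multigraph. For each construction I would single out a family of cross blocks whose chosen pairs span a connected ``skeleton'' touching every copy, and then check that the parity adjustments needed to keep $G$ even can be made without severing this skeleton. The 1-overlap condition is so weak, demanding only a single shared symbol, that there is ample freedom in the pair choices, and I expect connectivity to be arrangeable throughout; nonetheless it must be verified construction by construction, and the doubling and tripling-type constructions, where several full copies of a smaller system appear, are where the linking argument is most delicate. Once evenness and connectivity are secured for all six constructions together with the base cases, an Eulerian circuit of $G$ exists in every case, and since by Theorem~\ref{general} these constructions cover precisely the orders $v\equiv 2,4\pmod 6$, the induction closes and yields Theorem~\ref{main}.
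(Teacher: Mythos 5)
Your Euler-circuit reformulation is sound and is, in essence, an abstraction of what the paper actually does: the paper builds, for each block type in each of Hanani's six constructions, a family of short closed cycles (each block contributing its two chosen ``overlap'' endpoints), and then glues these closed cycles together at shared overlap points --- which is precisely Hierholzer's construction of an Eulerian circuit in a connected even multigraph. So the strategic frame is correct and compatible with the paper's. Two caveats on the frame itself: first, the parity is not ``controlled by symmetry'' via regularity of the design --- the degree of a point $x$ in your auxiliary graph $G$ is the number of blocks in which $x$ is \emph{chosen} as an endpoint, not the replication number of $x$, so evenness is entirely a consequence of how the pairs are selected and must be engineered explicitly; second, the induction hypothesis needed varies by construction (the paper's Results for Constructions \ref{3.1} and \ref{3.2} need only an SQS($n$), while those for Constructions \ref{3.3}--\ref{3.6} need an SQS($n$) that already admits a 1-ocycle), so the recursion has to be threaded through Theorem \ref{Hanani} with some care.

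The genuine gap is that the proposal stops exactly where the proof begins. You write that the pair choices ``must be verified construction by construction'' and that you ``expect connectivity to be arrangeable throughout,'' but this verification is the entire content of the theorem: for each of the six constructions one must exhibit, type by type (and some constructions have ten or more block types, several depending on Hanani's pair systems $P_\alpha(m)$ and $\overline{P}_\xi(m)$), an explicit choice of endpoints yielding closed cycles, and then identify a concrete ``spine'' cycle containing every point of $\{0,1,2\}\times\{0,\dots\}$ (or the analogous product set) as an overlap point so that all other cycles attach to it. The paper does this in Section 3, and the arguments are not uniform: the doubling construction uses the difference-$d$ cycles on type (2) blocks as the spine, the $3n-2$ construction uses the type (3) and (4) blocks, the $4n-6$ construction must interleave blocks of types (2)/(3) and (4)/(5) in alternating pairs to close cycles at all, and the base cases SQS(8), SQS(10), SQS(14), SQS(38) require explicit listings. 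None of this follows from the reformulation alone, and there is no a priori guarantee (short of doing the work) that a given block type even admits a pairing into closed cycles without borrowing blocks from another type, as indeed happens in several of the paper's cases. As written, the proposal is a correct plan with the proof deferred.
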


Note that $s$-ocycles may be thought of as a special type of Gray code.  Define a \textbf{$t$-swap Gray code} on a design $(X, \mathcal{B})$ to be an ordering of the blocks in $\mathcal{B}$ as $B_1, B_2, \ldots , B_n$ in which $|B_i \cap B_{i+1}| \geq t$.  Then an $s$-ocycle produces an $s$-swap Gray code.

We begin with a review of Hanani's constructions from \cite{Hanani} in Section 2, then illustrate 1-ocycles corresponding to each construction in Section 3, and finally conclude with several other results concerning $s$-ocycles for Steiner quadruple systems in Section 4.

\section{Hanani's Constructions}

To begin, we make a note that Hanani defines two systems of unordered pairs in \cite{Hanani}.  These systems, referred to as $P_\alpha(m)$ and $\overline{P}_\xi(m)$, are necessary for his constructions of Steiner quadruple systems.  However, our methods of constructing ocycles do not depend on the precise definitions of these sets, so we refer the reader to \cite{Hanani} for a complete definition, which will be omitted here.
\\

The first construction produces an SQS($2n$) from an SQS($n$).
\begin{const}\label{3.1}
	Let $(X, \mathcal{B})$ be an SQS with $|X| = n$.  Let $X = \{0, 1, \ldots, n-1\}$, and define a new point set $\mathcal{Y} = \{0, 1\} \times X$.  The blocks on $\mathcal{Y}$ that form an SQS($2n$) are as follows.
	\\
	\begin{enumerate}
		\item  $\{a_1x, a_2y, a_3z, a_4t\}$ where $\{x,y,z,t\} \in \mathcal{B}$ and $a_1 + a_2 + a_3 + a_4 \equiv 0 \pmod 2$
		\\
		\item  $\{0j, 0j', 1j, 1j'\}$ for $j \neq j'$
		\\
	\end{enumerate}
\end{const}

The second construction produces an SQS($3n-2$) from an SQS($n$).
\begin{const}\label{3.2}
	Let $(X, \mathcal{B})$ be an SQS with $|X| = n$.  Let $$X = \{0,1, 2, \ldots , n-2\} \cup \{A\}.$$  Let $\mathcal{B} = \mathcal{B}_A \cup \overline{\mathcal{B}_A}$ where $\mathcal{B}_A$ denotes all blocks containing $A$ and $\overline{\mathcal{B}_A}$ denotes all blocks \textit{not} containing the point A.  We construct new blocks on the set $$\mathcal{Y} = \left(\{0,1,2\} \times \{0,1,2, \ldots, n-2\} \right) \cup \{A\}.$$  Note that $\mathcal{Y}$ has cardinality $1 + 3(n-1) = 3n-2$.  The new blocks are as follows.
\\
\begin{enumerate}
	\item  $\{a_1 x, a_2 y, a_3z, a_4t\}$ for $\{x,y,z,t \} \in \overline{\mathcal{B}_A}$ and $a_1 + a_2 + a_3 + a_4 \equiv 0 \pmod 3$
	\\
	\item  $\{A, b_1u, b_2v, b_3w\}$ for $\{A, u, v, w \} \in \mathcal{B}_A$ and $b_1 + b_2 + b_3 \equiv 0 \pmod 3$
	\\
	\item  $\{iu, iv, (i+1)w, (i+2)w\}$ for $i \in \{0,1,2\}$ and $\{A, u, v, w\} \in \mathcal{B}_A$
	\\
	\item  $\{ij, ij', (i+1)j, (i+1)j'\}$ for $i \in \{0,1,2\}$ and $j,j' \in \{0,1,2, \ldots , n-2\}$ and $j \neq j'$
	\\
	\item  $\{A, 1j, 2j, 3j\}$ for $j \in \{0,1,2, \ldots n-2\}$
	\\
\end{enumerate}
\end{const}

The third construction produces an SQS($3n-8$) from an SQS($n$) when $n \equiv 2 \pmod {12}$.

\begin{const}\label{3.3}
	Let $(X, \mathcal{B})$ be an SQS with $|X| = n$ and $n \equiv 2 \pmod {12}$.  Let $$X = \{0, 1, 2, \ldots , n-5\}\cup \{ Ah : h \in \{0, 1, 2, 3\}\}.$$  We will make the assumption that $\{A0, A1, A2, A3\}$ is a block in $\mathcal{B}$.  Define $$\mathcal{Y} = \left( \{0, 1, 2\} \times \{0, 1, 2, \ldots , n-5\} \right) \cup \{Ah : h \in \{0, 1, 2, 3\}\}.$$  Note that $\mathcal{Y}$ has cardinality $3(n-4)+4 = 3n-8$.  We construct blocks on $\mathcal{Y}$ as follows.
\\
\begin{enumerate}
	\item  $\{A0, A1, A2, A3\}$
	\\
	\item  $\{ix, iy, iz, it\}$ where $\{x,y,z,t\} \in \mathcal{B} \setminus \{A0, A1, A2, A3\}$.  (If one of $x,y,z,t$ is $Ah$, omit the $i$.)  We denote this operation by $i \oplus (\mathcal{B} \setminus \{A0, A1, A2, A3\})$
	\\
	\item  $\{Aa_1, 0a_2, 1a_3, 2a_4\}$ where $a_1 + a_2 + a_3 + a_4 \equiv 0 \pmod {n-4}$
	\\
	\item  $\{(i+2)b_3, i(b_1+2k+1+i(4k+2)-d), i(b_1+2k+2+i(4k+2)+d),(i+1)b_2\}$ where $n-4  = 12k + 10$, $b_1+b_2+b_3 \equiv 0 \pmod {n-4}$, and $d \in \{0, 1, \ldots , 2k\}$
	\\
	\item  $\{ir_\alpha, is_\alpha, (i+1)r_\alpha', (i+1)s_\alpha'\}$ where $[r_\alpha, s_\alpha], [r_\alpha', s_\alpha'] \in P_\alpha (6k+5)$ (possible the same) where $\alpha = 4k+2, 4k+3, \ldots , 12k+8$
	\\
\end{enumerate}
\end{const}

The fourth construction produces an SQS($3n-4$) from an SQS($n$) when $n \equiv 10 \pmod {12}$.
\begin{const}\label{3.4}
	Let $(X, \mathcal{B})$ be an SQS with $|X| = n$ and $n \equiv 10 \pmod {12}$.  Let $$X = \{0, 1, 2, \ldots, n-3\} \cup \{A0, A1\}.$$  Define $$\mathcal{Y} = \left( \{ 0, 1, 2\} \times \{0, 1, \ldots , n-3\} \right) \cup \{A0, A1\}.$$  Note that $\mathcal{Y}$ has cardinality $3(n-2) + 2 = 3n-4$.  We construct blocks on $\mathcal{Y}$ as follows.
\\
\begin{enumerate}
	\item  $\{ix, iy, iz, it\}$ where $\{x,y,z,t\} \in \mathcal{B}$, or $i \oplus \mathcal{B}$.  (If one of $x,y,z,t$ is $Ah$, omit the $i$.)
	\\
	\item  $\{Aa_1, 0a_2, 1a_3, 2a_4\}$ where $a_1 + a_2 + a_3 + a_4 \equiv 0 \pmod {n-2}$ and $a_1 \in \{0, 1\}$ and $a_2, a_3, a_4 \in \{0,1,2, \ldots , n-2\}$
	\\
	\item  $\{(i+2)b_3, i(b_1+2k+1+i(4k+2)-d), i(b_1+2k+2+i(4k+2)+d), (i+1)b_2\}$ where $n = 12k + 10$, $b_1 + b_2 + b_3 \equiv 0 \pmod {n-2}$, and $d = 0, 1, \ldots , 2k$
	\\
	\item  $\{ir_\alpha, is_\alpha, (i+1)r_\alpha', (i+1)s_\alpha'\}$ where $[r_\alpha, s_\alpha], [r_\alpha', s_\alpha'] \in P_\alpha (6k+4)$ (possible the same) where $\alpha = 4k+2, 4k+3, \ldots , 12k+6$
	\\
	\end{enumerate}
\end{const}

The fifth construction produces an SQS($4n-6$) from an SQS($n$).
\begin{const}\label{3.5}
	Let $(X, \mathcal{B})$ be an SQS($n$) with $$X = \{0, 1, \ldots , n-2 \} \cup \{A0, A1\}.$$  Define $$\mathcal{Y} = \left( \{0, 1 \} \times \{0, 1\} \times \{0, 1, \ldots , n-3\} \right) \cup \{A0 ,A1\}.$$  Note that $\mathcal{Y}$ has cardinality $(2)(2)(n-2) + 2 = 4n-6$.  We construct blocks on $\mathcal{Y}$ as follows:
	\\
\begin{enumerate}
	\item  $h \oplus i \oplus \mathcal{B}$ where $h \in \{0, 1\}$ and $i \in \{0, 1\}$, and we ignore the prefix $hi$ from the points $A0, A1 \in X$
	\\
	\item  $\{A \ell, 00(2 c_1), 01(2c_2-\epsilon), 1 \epsilon (2c_3+\ell)\}$ where $\ell, \epsilon \in \{0, 1\}$ and $c_1 + c_2 + c_3 \equiv 0 \pmod k$, where $n=2k$
	\\
	\item  $\{A \ell, 00(2c_1+1), 01(2c_2-1-\epsilon), 1\epsilon (2c_3+1-\ell)\}$
	\\
	\item  $\{A \ell, 10(2c_1),11(2c_2-\epsilon),0\epsilon(2c_3+1-\ell)\}$
	\\
	\item  $\{A \ell, 10(2c_1+1),11(2c_2-1-\epsilon), 0 \epsilon (2c_3 + \ell)\}$
	\\
	\item  $\{h0(2c_1+\epsilon), h1(2c_2-\epsilon), (h+1)0 \overline{r}_{c_3}, (h+1)0 \overline{s}_{c_3}\}$ where $[\overline{r}_{c_3}, \overline{s}_{c_3}] \in \overline{P}_{c_3}(k)$ and $c_3 \in \{0, 1, \ldots , k-1\}$
	\\
	\item  $\{h0(2c_1-1+\epsilon), h1(2c_2-\epsilon), (h+1)1\overline{r}_{c_3}, (h+1)1\overline{s}_{c_3}\}$
	\\
	\item  $\{h0(2c_1+\epsilon), h1(2c_2-\epsilon), (h+1)1 \overline{r}_{k+c_3}, (h+1)1 \overline{s}_{k+c_3}\}$
	\\
	\item  $\{h0(2c_1-1 +\epsilon), h1(2c_2-\epsilon), (h+1)0\overline{r}_{k+c_3}, (h+1)0\overline{s}_{k+c_3}\}$
	\\
	\item  $\{h0r_\alpha, h0s_\alpha, h1r_\alpha',h1s_\alpha'\}$ where $[r_\alpha, s_\alpha], [r_\alpha', s_\alpha'] \in P_\alpha (k)$ and $\alpha \in \{0, 1, \ldots , n-4\}$
\end{enumerate}
\end{const}

The sixth and final construction produces an SQS($12n-10$) from an SQS($n$), and begins with the constructions of an SQS(14) and an SQS(38).
\\
\\
\textbf{SQS(14)} (listed as a 1-ocycle):
$$\begin{array}{l|l|l|l|l|l|l}
	2043 & 0BC2 & C841 & 38B5 & 0BD6 & B9CA & 259C \\
	3162 & 28B1 & 19B4 & 5AC3 & 6BC1 & A05B & CA68 \\
	25D3 & 1250 & 4095 & 36AD & 18D6 & B79D & 80B9 \\
	37A2 & 03B1 & 5134 & DBC3 & 629D & DA7C & 9158 \\
	28C3 & 1460 & 4265 & 36C4 & DB51 & CD40 & 8249 \\
	3560 & 0781 & 57A4 & 48B6 & 17C5 & 08A4 & 9368 \\
	07C3 & 19D0 & 48D5 & 60A7 & 51A6 & 4B07 & 87A9 \\
	38D0 & 0AC1 & 57D0 & 7196 & 69B5 & 749C & 9CD8 \\
	09A3 & 19A2 & 08C5 & 62C7 & 5C6D & CDA4 & 87CB \\
	39B2 & 2CD1 & 5BC4 & 73B6 & D59A & 4B2D & B17A \\
	2680 & 17D3 & 4783 & 64D7 & A26B & D872 & A964 \\
	0792 & 38A1 & 3D49 & 7586 & B34A & 27B5 & 4AD1 \\
	2AD0 & 139C & 9573 & 69C0 & A8DB & 58A2 & 1472
\end{array}$$
\\
\textbf{SQS(38)}:

We identify the points from the SQS(14) with the set $X = \{0, 1, 2, \ldots , 11\} \cup \{A0, A1\}$, and let $\mathcal{B}$ be the set of blocks form the SQS(14) on $X$.  Then we define our new point set $\mathcal{Y}$ as $$\mathcal{Y} = \left( \{0, 1, 2\} \times \{0, 1, 2, \ldots , 11\} \right) \cup \{A0, A1\}.$$  Note that $\mathcal{Y}$ has cardinality $(3)(12)+2 = 38$.  The blocks on $\mathcal{Y}$ are as follows:
\begin{enumerate}
	\item  $i \oplus \mathcal{B}$, where we omit the prefix $i$ for $A0$ or $A1$ \\
	\item  $\{Ah, 0b_1, 1b_2, 2(b_3+3h)\}$, where $b_1 + b_2 + b_3 \equiv 0 \pmod {12}$ and $h \in \{0, 1\}$ \\
	\item  $\{i(b_1+4+i), i(b_1+7+i), (i+1)b_2, (i+2)b_3\}$ \\
	\item  $\{ij, (i+1)(j+6 \epsilon), (i+2)(6 \epsilon-2j+1), (i+2)(6 \epsilon-2j-1)\}$ where $\epsilon \in \{0, 1\}$ \\
	\item  $\{ij, (i+1)(j+6 \epsilon), (i+2)(6 \epsilon-2j+2), (i+2)(6 \epsilon-2j-2)\}$ \\
	\item  $\{ij, (i+1)(j+6 \epsilon-3), (i+2)(6 \epsilon-2j+1), (i+2)(6 \epsilon-2j+2)\}$ \\
	\item  $\{ij, (i+1)(j+6\epsilon +3),(i+2)(6 \epsilon -2j -1), (i+2)(6 \epsilon -2j -2)\}$ \\
	\item  $\{ij, i(j+6), (i+1)(j+3\epsilon), (i+1)(j+6 + 3 \epsilon)\}$ \\
	\item  $\{i(2g+3\epsilon), i(2g+6+3\epsilon), i'( 2g+1), i'(2g+5)\}$ for $i' \neq i$ and $g \in \{0, 1, 2, 3, 4, 5\}$ \\
	\item $\{i(2g+3\epsilon), i(2g+6+3\epsilon), i'(2g+2), i'(2g+4)\}$ \\
	\item $\{ij, i(j+1), (i+1)(j+3e), (i+1)(j+3e+1)\}$ for $e = 0, 1, 2, 3$ \\
	\item $\{ij, i(j+2), (i+1)(j+3e), (i+1)(j+3e+2)\}$ \\
	\item $\{ij, i(j+4), (i+1)(j+3e), (i+1)(j+3e+4)\}$ \\
	\item $\{ir_\alpha, is_\alpha, i'r_\alpha', i's_\alpha'\}$ where $[r_\alpha, s_\alpha], [r'_\alpha, s'_\alpha] \in P_\alpha(6)$ for $\alpha = 4,5$
	\\
\end{enumerate}

\begin{const}\label{3.6}
	Let $(X, \mathcal{B})$ be an SQS with $|X| = n$.  Let $$X = \{B \} \cup \{0, 1, 2, \ldots , n-2\}.$$ Let $\mathcal{B}_B$ the subset of $\mathcal{B}$ with blocks containing $B$, and $\overline{\mathcal{B}_B}$ to be the complement.  Define $$\mathcal{Y} = \left( \{ 0, 1, 2, \ldots , n-2\} \times \{0, 1, 2, \ldots , 11\} \right) \cup \{A0, A1\}.$$  Note that $\mathcal{Y}$ has cardinality $(n-1)(12)+2 = 12n-10$.  We construct blocks on $\mathcal{Y}$ as follows:

\begin{enumerate}
	\item  $i \oplus \mathcal{B}(14)$ for $i \in \{0, 1, \ldots , n-2\}$ and where $i$ is omitted if the point is of the form $Ah$
	\\
	\item  \begin{enumerate}
	  \item  $\{Ah, ub_1, vb_2, w(b_3+3h)\}$ where $\{u,v,w,B\} \in \mathcal{A}_B$ and $b_1 + b_2 + b_3 \equiv 0 \pmod {12}$
\\
	  \item  $\{u\alpha_1, v\alpha_2, w\alpha_3, w \alpha_4\}$ where $\alpha_1, \alpha_2, \alpha_3, \alpha_4$ are the second indices (in order) of blocks of types (3) - (7) in the SQS(38)
\\
	  \item  $\{i\beta_1, i \beta_2, i'\beta_3, i'\beta_4\}$ where $\{i, i', B\}$ defines a unique block in $\mathcal{A}_B$ and $\beta_1, \beta_2, \beta_3, \beta_4$ are the second indices (in order) of blocks of type (8) - (14) in the SQS(38)
\\
	\end{enumerate}
	\item  $\{xa_1, ya_2, za_3, ta_4\}$, where $\{x,y,z,t\} \in \overline{\mathcal{A}_B}$ and $a_1 + a_2 + a_3 + a_4 \equiv 0 \pmod {12}$
\end{enumerate}
\end{const}

\begin{thm}\label{Hanani}
	For $n \equiv 2, 4 \pmod 6$ with $n>4$, there exists an SQS($n$).
\end{thm}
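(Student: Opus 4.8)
The plan is to prove the statement by strong induction on $n$, showing that every admissible order ($n \equiv 2, 4 \pmod 6$) with $n > 4$ is the output of one of Constructions \ref{3.1}--\ref{3.6} applied to a strictly smaller admissible order. First I would fix the base of the induction: the trivial systems SQS($2$) and SQS($4$) exist, and the explicitly exhibited SQS($14$) and SQS($38$) supply the orders that cannot be reduced further (these are, in fact, the internal ingredients required by Construction \ref{3.6}, so they must be given outright to avoid circularity, since $14 = 12\cdot 2 - 10$ is itself only reachable through that construction). Every other small admissible order, such as $8 = 2\cdot 4$, $10 = 3\cdot 4 - 2$, $16 = 2\cdot 8$, $20 = 2\cdot 10$, and $22 = 3 \cdot 8 - 2$, is obtained directly from a smaller one. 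The inductive step then amounts to a covering argument: for each admissible $N$ above the base threshold, produce an admissible $n < N$ together with a construction sending SQS($n$) to SQS($N$), and verify the congruence hypotheses of that construction.

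The covering is organized by the residue of $N$ modulo $12$, the admissible classes being $2, 4, 8, 10$. The two even-doubling classes are immediate: if $N \equiv 4 \pmod{12}$ then $n = N/2 \equiv 2 \pmod 6$ and Construction \ref{3.1} applies, while if $N \equiv 8 \pmod{12}$ then $n = N/2 \equiv 4 \pmod 6$, so in both cases halving lands on an admissible order. The classes $N \equiv 2 \pmod{12}$ and $N \equiv 10 \pmod{12}$ are where the remaining constructions are forced, since here $N/2$ is not admissible. Writing $N = 12k + 2$ or $N = 12k + 10$ and subdividing according to $k \bmod 6$, I would assign to each subcase exactly one of Constructions \ref{3.2}--\ref{3.6}: the tripling map $N = 3n - 2$ of Construction \ref{3.2} whenever $(N+2)/3$ is admissible, the quadrupling map $N = 4n - 6$ of Construction \ref{3.5} whenever $(N+6)/4$ is admissible, and the restricted triplings $N = 3n - 8$ (Construction \ref{3.3}, requiring $n \equiv 2 \pmod{12}$) and $N = 3n - 4$ (Construction \ref{3.4}, requiring $n \equiv 10 \pmod{12}$) together with the $12$-fold map $N = 12n - 10$ of Construction \ref{3.6} to cover the residues the first two miss. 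A representative check: for $N = 12k + 2$ with $k \equiv 1, 3 \pmod 6$ none of the tripling or quadrupling preimages is an admissible integer, but $n = (N+10)/12 = k + 1$ is admissible, so Construction \ref{3.6} handles precisely these cases --- which is exactly why that construction, and hence the explicit SQS($14$), is indispensable.

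The main obstacle is precisely this residue bookkeeping: one must check that the six reductions, taken together, leave no admissible residue uncovered, and that in each case the chosen preimage $n$ is simultaneously an integer, admissible, strictly smaller than $N$, and compatible with the side conditions $n \equiv 2 \pmod{12}$ or $n \equiv 10 \pmod{12}$ imposed by Constructions \ref{3.3} and \ref{3.4}. Because the multipliers $2, 3, 4, 12$ and the moduli of those side conditions interact, the cleanest route is to refine the analysis within each of the two hard classes to the residue of $k$ modulo $6$ (equivalently, tracking $N$ modulo $72$), after which each residue is matched to a single construction and exhaustiveness can be confirmed by inspection. Since every reduction replaces $N$ by a value at most $N/2$, termination of the induction is automatic once the finitely many small admissible orders below the base threshold have been verified by hand; collecting those base cases and confirming that the residue covering is complete finishes the proof.
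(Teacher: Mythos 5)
Your proposal is correct and takes essentially the same approach as the paper: induction on the order, with the explicit SQS($14$) and SQS($38$) as irreducible base cases and a partition of the admissible orders into residue classes (effectively modulo $72$) so that each class is reached by exactly one of Constructions \ref{3.1}--\ref{3.6} applied to a smaller admissible order. The only cosmetic difference is that the paper begins its base cases at $n=8,10$ with explicitly exhibited systems (because its larger aim is 1-ocycles, which SQS($2$) and SQS($4$) cannot admit), whereas you derive $8$ and $10$ from SQS($4$); for the bare existence statement both are fine.
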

\begin{proof}
	Proceed by induction on $n$.  Since it is not possible to create a 1-ocycle for an SQS(2) or SQS(4), we begin our base cases with $n = 8, 10$.  If $n = 8$, we have the following SQS, arranged in 1-ocycle form:
	$$2148, 8523, 3684, 4578, 8156, 6287, 7813, 3576, 6471, 1572, 2163, 3274, 4135, 5462.$$    When $n= 10$, we have the following SQS(10), arranged in 1-ocycle form:
	$$\begin{array}{l|l|l|l|l|l}
		2145 & 6907 & 1372 & 7268 & 2694 & 7491 \\
		5263 & 7108 & 2483 & 8379 & 4681 & 1693 \\
		3674 & 8192 & 3594 & 9480 & 1583 & 3608 \\
		4785 & 2903 & 4506 & 0591 & 3705 & 8520 \\
		5896 & 3401 & 6157 & 1602 & 5297 & 0472
	\end{array}$$
	
	Before Construction \ref{3.6}, we illustrated an SQS(14) and an SQS(38).
	
	Let $n \geq 16$ with $n \equiv 4, 8 \pmod {12}$.  Then $n=2v$ for some $v \equiv 2,4 \pmod 6$.  Since $n \geq 16$, $v \geq 8$, and hence we use Construction \ref{3.1}.

	Let $n \geq 22$ with $n \equiv 4, 10 \pmod {18}$.  Then $n=3v-2$ for some $v \equiv 2,4 \pmod 6$.  Since $n \geq 22$, $v \geq 8$, and hence we use Construction \ref{3.2}.
	
	Let $n \geq 26$ with $n \equiv 2, 10 \pmod {24}$.  Then $n = 4v-6$ for some $v \equiv 2, 4 \pmod 6$.  Since $n \geq 26$, $v \geq 8$, and hence we use Construction \ref{3.5}.
	
	Let $n \geq 26$ with $n \equiv 26 \pmod {36}$.  Then $n = 3v-4$ for some $v \equiv 10 \pmod {12}$.  Since $n \geq 26$, $v \geq 10$, and hence we use Construction \ref{3.4}.
	
	Let $n \geq 34$ with $n \equiv 34 \pmod {36}$.  Then $n =3v-8$ for some $v \equiv 2 \pmod {12}$.  Since $n \geq 34$, $v \geq 14$, and hence we use Construction \ref{3.3}.
	
	Let $n \geq 86$ with $n \equiv 14, 38 \pmod {72}$.  Then $n = 12v-10$ for some $v \equiv 2, 4 \pmod 6$.  Since $n \geq 86$, $v \geq 8$, and hence we use Construction \ref{3.6}.

\end{proof}

\section{Ocycles and Hanani's Constructions}

Every result in this section will be proven in the same manner.  First, each type of block will generate individual ocycles.  Then we will illustrate how to connect all cycles together to form one 1-ocycle that covers all blocks.

\begin{result}\label{OC3.1}
	Let $(X, \mathcal{B})$ be an SQS of order $n$.  Then there exists an SQS of order $2n$ that admits a 1-ocycle.
\end{result}
\begin{proof}
We use Construction \ref{3.1}.  To create cycles on the quadruples we do the following.
	\begin{enumerate}
		\item  Fix $a_1, a_2 \in \{0, 1\}$ and $\{x,y,z,t\} \in \mathcal{B}$.  We have two choices for $a_3$, and this choice completely determines $a_4$.  Thus by fixing $a_1, a_2$ and $\{x,y,z,t\}$, we have identified two blocks of type (1).  We can construct the short 2-block 1-ocycle as shown below:
		$$a_1x \cdots a_2y \cdots a_1x.$$
		\item  For each $d \in \{1, 2, \ldots , \lfloor \frac{n}{2} \rfloor \}$, define the following cycle:
		$$\begin{array}{llll}
			00 & 10 & 1d & 0d \\
			0d & 1d & 1(2d) & 0(2d) \\
			0(2d) & 1(2d) & 1(3d) & 0 (3d) \\
			\vdots
		\end{array}$$
		We continue this cycle until we arrive back at $00$.  At this point, if gcd$(d,n)=1$, we will have covered all blocks of difference $d$.  However, if gcd$(d,n) > 1$, then we may need multiple cycles to cover all blocks.  In this case we just start anew with the first block missed.
		\\
	\end{enumerate}	
	To connect these cycles, we look to the blocks of type (2).  Note that when $d=1$ we will obtain one long cycle, call it $\mathcal{C}$, that has all points $0j$ with $j \in \{0, 1, 2, \ldots n-1\}$ as overlap points.  Then we can join all cycles from (2) to $\mathcal{C}$.  To attach the cycles of type (1), we utilize cycles from (1) with $a_1 = 0$.  We can connect these all to $\mathcal{C}$ using the overlap points $a_1x = 0x$.  For the remaining cycles with $a_1=1$, it is again possible to connect to $\mathcal{C}$ as follows.  If $a_2=0$ we can attach at the points $a_2y=0y$ on $\mathcal{C}$, and if $a_2=1$ we can attach at the points $a_2y=1y$ that exist on the cycles from (1) with $a_1=0,a_2=1$.
\end{proof}

\begin{result}\label{OC3.2}
	Let $(X, \mathcal{B})$ be an SQS of order $n$.  Then there exists an SQS of order $3n-2$ that admits a 1-ocycle.
\end{result}
\begin{proof}
	We use Construction \ref{3.2}.  To create cycles on the quadruples we do the following.
\begin{enumerate}
	\item  Fix $a_1, a_2 \in \{0,1,2\}$ and a block $\{x,y,z,t \} \in \overline{\mathcal{B}_A}$.  Note that these choices determine a set of 3 blocks in the SQS($3n-2$) since we may choose any $a_3 \in \{0,1,2\}$, but then our choices have completely determined $a_4$.  For each $a_1 \in \{0,1,2\}$ and $\{x,y,z,t\} \in \overline{\mathcal{B}_A}$, we create the 1-ocycle $$\begin{array}{ccccll} 
	0z & a_1 x & a_4t & 0y \\
	0y & 1z & a_4t & a_1 x & \hspace{5mm} & a_2=0 \\
	a_1x & 0y & a_4t & 2z \\
	\hline
	2z & 1y & a_4t & a_1x \\
	a_1 x & a_4t&1z & 1y & \hspace{5mm} & a_2 = 1 \\
	1y & a_4t& 0z& a_1 x \\
	\hline
	a_1x & 2z& a_4t & 2y \\
	2y & 1z& a_4t& a_1 x  & \hspace{5mm} & a_2 = 2\\
	a_1x & a_4t& 2y& 0z
	\end{array}$$
	\item  Fix $\{A, u, v, w \} \in \mathcal{B}_A$ and choose $b_1 \in \{0,1,2\}$.  This identifies 3 blocks: $$\{A, 0v, b_3^0w, b_1 u \}, \hspace{2 mm} \{A, 1v, b_3^1w, b_1 u\}, \hbox{ and } \{A, 2v, b_3^2w, b_1u\}$$  where $\{b_3^0, b_3^1, b_3^2\}=\{0,1,2\}.$  Note that the order of $\{b_3^0, b_3^1, b_3^2\}$ depends entirely on our choice of $b_1$.  We can string together the groups of 3 blocks for each choice of $b_1$ to create the following 1-ocycle:
	$$\begin{array}{ccccll}
		0v & A & 0w & 0u \\
		0u & 1v & 2w & A & \hspace{5mm} & b_1=0\\
		A &0u & 1w & 2v \\
		\hline
		2v & 1u & 0w & A \\
		A & 1v & 1w & 1u & \hspace{5mm} & b_1=1\\
		1u & 0v & 2w & A \\
		\hline
		A & 2v & 2w & 2u \\
		2u & 1v & 0w & A & \hspace{5mm} & b_1=2\\
		A & 2u & 1w & 0v
	\end{array}$$
	\item  Fix $\{A, u,v,w\} \in \mathcal{B}_A$ and $i \in \{0, 1, 2\}$.  The 3 blocks identified create a 1-ocycle: $$\begin{array}{cccc}
	iu & (i+1)w & (i+2)w & iv \\
	iv & (i+1)u & (i+2) u & iw \\
	iw & (i+1)v & (i+2)v & iu
	\end{array}$$
	\item  For each choice of $j$ and $j'$, we create the short cycle: $$\begin{array}{cccc}
		0j & 0j' & 1j' & 1j \\
		1j & 1j' & 2j' & 2j \\
		2j & 2j' & 0j' & 0j
	\end{array}$$

	\item  We can create the short strings:
	$$\begin{array}{cccc}
		0j & 1j & 2j & A \\
		A & 2(j+1) & 1(j+1) & 0(j+1) \\
	\end{array}$$
	To create an ocycle, we will utilize a few of the cycles from (4).  For each even $j \in \{0, 1, 2, \ldots , n-1\}$, set $j' = j+1$.  Then we modify the corresponding cycle from (4) to include one of the short strings as shown:
	$$\begin{array}{cccc}
		0j & 0j' & 1j' & 1j \\
		1j & 1j' & 2j' & 2j \\
		2j & 2j' & 0j' & 0j
	\end{array} \rightarrow
	\begin{array}{cccc}
		0j & 0j' & 1j' & 1j \\
		1j & 1j' & 2j' & 2j \\
		2j & 2j' & 0j & 0j' \\
		\hline
		0j' & 1j' & 2j' & A \\
		A & 2j & 1j & 0j
	\end{array}$$
	Note that this also ensures that $A$ appears as an overlap point, so together with $\mathcal{C}$ from (4), we are ensured that \textit{every} point appears as an overlap point.
\end{enumerate}
To connect these cycles and make one ocycle, we consider the blocks of type (3).  Fix $i \in \{0, 1, 2\}$ and $u \in \{0, 1, 2, \ldots , n-2\}$ and let $v$ vary through $\{0, 1, 2, \ldots , n-2\} \setminus \{u\}$.  Then each of these cycles from (3) containing $\{A, u, v\}$ has the point $iu$ as an overlap point so we can connect all of the cycles to make a long cycle, call it $\mathcal{C}_{i,u}$.  To connect $\mathcal{C}_{0,u}, \mathcal{C}_{1,u}$, and $\mathcal{C}_{2,u}$ we use a cycle from (4) with $j = u$.  Now we have created a cycle containing every point $ij \in \{0, 1, 2\} \times \{0, 1, 2, \ldots , n-2\}$ as an overlap point, so every other cycle can connect to this one.
\end{proof}

\begin{result}\label{OC3.3}
	Let $(X, \mathcal{B})$ be an SQS($n$) with $n \equiv 2 \pmod {12}$ that admits a 1-ocycle.  Then there exists an SQS($3n-8$) that admits a 1-ocycle.
\end{result}
\begin{proof}
	We use Construction \ref{3.3}.  We create cycles as follows on each set of blocks.
\begin{enumerate}
	\item  We will add this block to a cycle over blocks of type (2).
	\\
	\item  If we have an overlap cycle on $(X, \mathcal{B})$, call it $\mathcal{C}$, then for each $i \in \{0,1,2\}$ we get a cycle $\mathcal{C}_i$ by preceding each letter $x \in \{0, 1, 2, \ldots , n-5\}$ by $i$ and leaving all terms $Ah$ with $h \in \{0, 1, 2,3 \}$ unchanged.  However, each cycle uses the block $\{A0, A1, A2, A3\}$ so we can only use one of these cycles, say $\mathcal{C}_0$.  For $\mathcal{C}_1$ and $\mathcal{C}_2$, we get two strings by removing the block $\{A0, A1, A2, A3\}$.  Note that $\mathcal{C}_1$ and $\mathcal{C}_2$ can then be joined together at the endpoints.  For example, if the block appears as $A0, A1, A2, A3$ (in order) in $\mathcal{C}$, then we create the cycle $$A3 \hspace{5mm} \mathcal{C}_1 \hspace{5mm} A0 \hspace{5mm} \overline{\mathcal{C}_2} \hspace{5mm} A3$$ where $\overline{\mathcal{C}_2}$ is the string $\mathcal{C}_2$ listed in reverse order.  
	\\
	\item  If we fix $a_1, a_2$, then we have restricted our attention to $n-4$ distinct blocks.  Note that $n-4$ is even, and so we make the following cycles:
	$$\begin{array}{ccc}
		Aa_1 & \cdots & 0 a_2 \\
		0 a_2 & \cdots & Aa_1 \\
		Aa_1 & \cdots & 0 a_2 \\ 
		\vdots \\
		0a_2 & \cdots & Aa_1
	\end{array}$$
	\item  We will arrange each block so that the overlap points are $(i+2)b_3$ and $(i+1)b_2$.  Note that any choice of $b_2, b_3 \in \{0, 1, 2, \ldots n-5\}$ completely determines $b_1$, and by choosing $b_2, b_3, i, d$ we have identified a unique block.  Fix $b_2 \neq b_3$ and $ d$, and then we will create one cycle as follows.  The cycle when $b_2 \neq b_3$ is:
	$$\begin{array}{ccc}
		0b_3 & \cdots & 1b_2 \\
		1b_2 & \cdots & 2b_3 \\
		2b_3 & \cdots & 0b_2 \\
		0b_2 & \cdots & 1b_3 \\
		1b_3 & \cdots & 2b_2 \\
		2b_2 & \cdots & 0b_3 \\
	\end{array}$$
	When $b_2 = b_3$, we have the shorter cycle:
	$$\begin{array}{ccc}
		0b_2 & \cdots & 1b_2 \\
		1b_2 & \cdots & 2b_2 \\
		2b_2 & \cdots & 0b_2 \\
	\end{array}$$
	\\
	\item  Fix $\alpha$, and arbitrarily order the set $P_\alpha (6k+5) = \{[r(1),s(1)], [r(2), s(2)], \ldots , [r(t), s(t)]\}$.  For each $d = 0, 1, 2, \ldots , t-1$, we create a cycle $C_d$ as follows:
	$$\begin{array}{llll}
		0r(1) & 0s(1) & 1s(1+d) & 1r(1+d) \\
		1r(1+d) & 1s(1+d) & 2s(1+2d) & 2r(1+2d) \\
		2r(1+2d) & 2s(1+2d) & 0s(1+3d) & 0r(1+3d) \\
		\vdots \\
		2r(1-d) & 2s(1-d) & 0s(1) & 0r(1)
	\end{array}$$
	This cycle will always eventually reach the final block.  However, it may or may not also cover the intermediary blocks $\{1r(1-d), 1s(1-d), 2s(1), 2r(1)\}$ and $\{0r(1-d), 0s(1-d), 1s(1), 1r(1)\}$.  If $C_d$ misses these intermediary blocks, then we create a new cycle similar to $C_d$ but replacing $0,1,2$ with either $1,2,0$ or $2,0,1$, respectively. 

\end{enumerate}

To connect all of these cycles, we first focus on (4).  We create one long cycle by fixing $b_2=0$ and creating a cycle $C_{b_3}$ for each $b_3 \in \{0, 1, 2, \ldots , n-5\}$.  We connect all of these cycles at their points $00$ to create one long cycle $C$ that has every overlap of the type $$\{0, 1, 2\} \times \{0, 1, 2, \ldots , n-5\}.$$  Using this cycle $C$, we can connect every other cycle to $C$ to make one long cycle containing all quadruples.
\end{proof}

\begin{result}
	Let $(X, \mathcal{B})$ be an SQS($n$) with $n \equiv 10 \pmod {12}$ that admits a 1-ocycle.  Then there exists an SQS($3n-4$) that admits a 1-ocycle.
\end{result}

\begin{proof}
	We use Construction \ref{3.4}.  To form cycles, we do the following for each type.
\begin{enumerate}
	\item  If $\mathcal{C}$ is a cycle on $(X, \mathcal{B})$, then $i \mathcal{C}$ is a cycle on quadruples of type (1) for each $i \in \{0,1,2\}$.
	\\
	\item  If we fix $a_1, a_2$, then we have restricted our attention to $n-2$ distinct blocks.  Note that $n-2$ is even, and so we make the following cycles:
	$$\begin{array}{ccc}
		Aa_1 & \cdots & 0 a_2 \\
		0 a_2 & \cdots & Aa_1 \\
		Aa_1 & \cdots & 0 a_2 \\ 
		\vdots \\
		0a_2 & \cdots & Aa_1
	\end{array}$$
	\item  We will arrange each block so that the overlap points are $(i+2)b_3$ and $(i+1)b_2$.  Note that any choice of $b_2, b_3 \in \{0, 1, 2, \ldots n-3\}$ completely determines $b_1$, and by choosing $b_2, b_3, i, d$ we have identified a unique block.  Fix $b_2 \neq b_3$ and $ d$, and then we will create one cycle as follows.  The cycle when $b_2 \neq b_3$ is:
	$$\begin{array}{ccc}
		0b_3 & \cdots & 1b_2 \\
		1b_2 & \cdots & 2b_3 \\
		2b_3 & \cdots & 0b_2 \\
		0b_2 & \cdots & 1b_3 \\
		1b_3 & \cdots & 2b_2 \\
		2b_2 & \cdots & 0b_3 \\
	\end{array}$$
	When $b_2 = b_3$, we have the shorter cycle:
	$$\begin{array}{ccc}
		0b_2 & \cdots & 1b_2 \\
		1b_2 & \cdots & 2b_2 \\
		2b_2 & \cdots & 0b_2 \\
	\end{array}$$
	\\
	\item  Fix $\alpha$, and arbitrarily order the set $P_\alpha (6k+4) = \{[r(1),s(1)], [r(2), s(2)], \ldots , [r(t), s(t)]\}$.  For each $d = 0, 1, 2, \ldots , t-1$, we create a cycle $C_d$ as follows:
	$$\begin{array}{llll}
		0r(1) & 0s(1) & 1s(1+d) & 1r(1+d) \\
		1r(1+d) & 1s(1+d) & 2s(1+2d) & 2r(1+2d) \\
		2r(1+2d) & 2s(1+2d) & 0s(1+3d) & 0r(1+3d) \\
		\vdots \\
		2r(1-d) & 2s(1-d) & 0s(1) & 0r(1)
	\end{array}$$
	This cycle will always eventually reach the final block.  However, it may or may not also cover the intermediary blocks $\{1r(1-d), 1s(1-d), 2s(1), 2r(1)\}$ and $\{0r(1-d), 0s(1-d), 1s(1), 1r(1)\}$.  If $C_d$ misses these intermediary blocks, then we create a new cycle similar to $C_d$ but replacing $0,1,2$ with either $1,2,0$ or $2,0,1$, respectively. 
\end{enumerate}

To connect all of these cycles, we first focus on (3).  We create one long cycle by fixing $b_2=0$ and creating a cycle $C_{b_3}$ for each $b_3 \in \{0, 1, 2, \ldots , n-3\}$.  We connect all of these cycles at their points $00$ to create one long cycle $C$ that has every overlap of the type $$\{0, 1, 2\} \times \{0, 1, 2, \ldots , n-3\}.$$  Using this cycle $C$, we can connect every other cycle to $C$ to make one long cycle containing all quadruples.
\end{proof}

\begin{result}
	Let $(X, \mathcal{B})$ be an SQS($n$) that admits a 1-ocycle.  Then there exists an SQS($4n-6$) that admits a 1-ocycle.
\end{result}
\begin{proof}
	We use Construction \ref{3.5}.  To create cycles on these blocks, we do the following.
\begin{enumerate}
	\item  If $\mathcal{C}$ is a 1-overlap cycle on $\mathcal{B}$, then for each choice of $h, i \in \{0, 1\}$ we create the cycle $h \oplus i \oplus \mathcal{C}$.
	\\
	\item  We will combine these with the triples of type (3).  Note in particular that each block is completely determined by our choice of $2c^{(2)}_2 - \epsilon^{(2)}$ and $2c^{(2)}_3 + \ell^{(2)}$ from $\{0, 1, 2, \ldots , n-3\}$, where superscript (2) denotes a variable corresponding to a block of type (2), and similarly for superscript (3) and blocks of type (3).
	\\
	\item  Note that each block is completely determined by our choice of $2c^{(3)}_2-1-\epsilon^{(3)}$ and $2c^{(3)}_3 + 1-\ell^{(3)}$ from $\{0, 1, 2, \ldots , n-3\}$.  Fix $x \in \{0, 1, 2, \ldots , n-3\}$, and define the  cycles as follow, which alternates between blocks of type (2) (where $x = 2c^{(2)}_3+\ell^{(2)}$) and (3) (where $x = 2c^{(3)}_3 + 1 - \ell^{(3)}$).  
	
	We will connect pairs of blocks (one of type (2), one of type (3)) with $2c^{(2)}_2 - \epsilon^{(2)} = 2c^{(3)}_2-1-\epsilon^{(3)}$.  Note that this implies that $\epsilon^{(2)} \neq \epsilon^{(3)}$.  These blocks are connected to make short strings as shown by matching a block of type (2) and (3) in which $2c_2^{(2)}-\epsilon^{(2)} = 2c^{(3)}_2-1-\epsilon^{(3)}$ : $$ \begin{array}{llll}
		1\epsilon^{(2)}x, & A\ell^{(2)}, & 00(2c_1^{(2)}), & 01(2c_2^{(2)}-\epsilon^{(2)}) \\
		01(2c^{(3)}_2-1-\epsilon^{(3)}), & 00(2c_1^{(3)}+1), & A\ell^{(3)},& 1\epsilon^{(3)}x
	\end{array}$$
	
	To connect these two-block strings, we define $y = 2c_2^{(2)}-\epsilon^{(2)} = 2c^{(3)}_2-1-\epsilon^{(3)}$, and let $y$ range from $0$ up to $n-3$ to create a cycle covering all of these strings.
	
	$$\begin{array}{lll}
		10x & \cdots & 010 \\
		010 & \cdots & 11x \\
		11x & \cdots & 011 \\
		011 & \cdots & 10x \\
		10x & \cdots & 012 \\
		012 & \cdots & 11x \\
		\vdots \\
		01(n-4) & \cdots & 11x \\
		11x & \cdots & 01(n-3) \\
		01(n-3) & \cdots & 10x
	\end{array}$$  For each choice of $x$ we will have one cycle, and each cycle has length $2(n-2)$, covering a total of $2(n-2)^2$ blocks.
	\\
	\item  We will combine these with the triples of type (5).  Note in particular that each block is completely determined by our choice of $2c_2-\epsilon$ and $2c_3 + 1 - \ell$ from $\{0, 1, 2 , \ldots , n-3\}$.
	\\
	\item  We proceed in a manner similar to (3).  Note that each block is completely determined by our choice of $2c_2-1-\epsilon$ and $2c_3+\ell$ from $\{0, 1, 2, \ldots , n-3\}$.  Fix $x \in \{0, 1, 2, \ldots , n-3\}$, and define the following cycle, which alternates between blocks of type (4) (where $x=2c_3+1-\ell$) and (5) (where $x=2c_3 + \ell$).
	$$\begin{array}{lll}
		00x & \cdots & 110 \\
		110 & \cdots & 01x \\
		01x & \cdots & 111 \\
		111 & \cdots & 00x \\
		00x & \cdots & 112 \\
		112 & \cdots & 01x \\
		\vdots \\
		11(n-4) & \cdots & 01x \\
		01x & \cdots & 11(n-3) \\
		11(n-3) & \cdots & 00x
	\end{array}$$
	\\
	\item  We will combine these with the triples of type (8).
	\\
	\item  We will combine these with the triples of type (9).
	\\
	\item  Note that the first two terms in both (6) and (8) are the same.  For each choice of $2c_1 + \epsilon, 2c_2 - \epsilon \in \{0, 1, 2, \ldots , n-3\}$, we create the short 2-block cycle: $$h0(2c_1 + \epsilon) \cdots h1(2c_2 - \epsilon)$$ $$h1(2c_2-\epsilon) \cdots h0(2c_1 + \epsilon)$$ where the first block is of type (6) and the second block is of type (8).
	\\
	\item  Note that the first two terms in both (7) and (9) are the same.  For each choice of $2c_1-1+\epsilon, 2c_2 - \epsilon \in \{0, 1, 2, \ldots , n-3\}$ we create the short 2-block cycle:  $$h0(2c_1-1 + \epsilon) \cdots h1(2c_2-\epsilon)$$ $$h1(2c_2-\epsilon) \cdots h0(2c_1-1 + \epsilon)$$ where the first block is of type (7) and the second block is of type (9).
	\\
	\item  We use a similar method as in similar blocks in previous constructions.  Fix $\alpha$ and write $P_\alpha(k) = \{[r(1),s(1)], [r(2), s(2)], \ldots , [r(t), s(t)]\}$.  For each difference $d$ from 2 to $\lfloor \frac{t}{2} \rfloor$ we define several cycles.  For each difference, the pairs will be partitioned into sets of a certain size, say $s_d$.  The number of cycles defined will depend on whether $s_d$ is even or odd.
	
	For example, for difference 2, if $s_1$ is even then we have the four cycles (two for each choice of $h \in \{0, 1\}$) for each partition class: $$\begin{array}{lll}
		h0r(1) & \cdots & h1r(3) \\
		h1r(3) & \cdots & h0r(5) \\
		h0r(5) & \cdots & h1r(7) \\
		\vdots \\
		h1r(t-1) & \cdots & h0r(1)
	\end{array}$$ and $$\begin{array}{lll}
		h1r(1) & \cdots & h0r(3) \\
		h0r(3) & \cdots & h1r(5) \\
		h1r(5) & \cdots & h0r(7) \\
		\vdots \\
		h0r(t-1) & \cdots & h1r(1)
	\end{array}$$
	
	If $s_1$ is odd we have the two cycles (one for each choice of $h$):
	$$\begin{array}{lll}
		h0r(1) & \cdots & h1r(3) \\
		h1r(3) & \cdots & h0r(5) \\
		h0r(5) & \cdots & h1r(7) \\
		\vdots \\
		h0r(t-1) & \cdots & h1r(1) \\
		h1r(1) & \cdots & h0r(3) \\
		h0r(3) & \cdots & h1r(5) \\
		h1r(5) & \cdots & h0r(7) \\
		\vdots \\
		h0r(t-1) & \cdots & h1r(1)
	\end{array}$$
	We construct cycles in a similar manner for each partition set of each difference.
	
	All that remains are the blocks of differences 0 and 1.  We will construct two cycles, one for each choice of $h$ as follows:
	$$\begin{array}{llll}
		h0r(1) & h0s(1) & h1s(2) & h1r(2) \\
		h1r(2) & h1s(2) & h0r(2) & h0s(2) \\
		h0s(2) & h1s(1) & h1r(1) & h0r(2) \\
		\hline
		h0r(2) & h0s(2) & h1s(3) & h1r(3) \\
		h1r(3) & h1s(3) & h0r(3) & h0s(3) \\
		h0s(3) & h1s(2) & h1r(2) & h0r(3) \\
		\hline
		\vdots \\
		\hline
		h0r(t) & h0s(t) & h1s(1) & h1r(1) \\
		h1r(1) & h1s(1) & h0r(1) & h0s(1) \\
		h0s(1) & h1s(t) & h1r(t) & h0r(1) \\
	\end{array}$$
\end{enumerate}
Now we must connect all of these cycles.  Note that the cycles described in (3) and (5) contains as overlap points all points of the form $11y$ for $y \in \{0, 1, 2, \ldots , n-3\}$, regardless of our choice of $x$.  Thus we can connect all of these cycles together to make one cycle.  This one long cycle contains as overlap points all points of the form $hiy$ for $h \in \{0, 1\}$, $i \in \{0, 1\}$, and $y \in \{0, 1, 2, \ldots , n-3\}$.  Thus we can connect everything to this cycle, since no points $A\ell$ are used as overlap points.
\end{proof}

\begin{result}
	Let $(X, \mathcal{B})$ be an SQS($n$) that admits a 1-ocycle.  Then there exists an SQS($12n-10$) that admits a 1-ocycle.
\end{result}
\begin{proof}
	We begin by proving that the construction for an SQS(38) admits a 1-ocycle.  To construct a 1-overlap cycle, we look at each type of block separately.
\begin{enumerate}
	\item  Since we have a 1-overlap cycle for the SQS(14), clearly we can construct one cycle for each $i$ on these types of blocks. \\
	\item  Fix $b_1$ and $b_2$ and construct a short 2-block cycle by changing $h$ from 0 to 1. 
	$$\begin{array}{llll}
		0b_1 & A0 & 2b_3 & 1b_2 \\
		1b_2 & 2(b_3+3) & A1 & 0b_1 
	\end{array}$$
	\\
	\item  Fix $\{b_2, b_3\} = \{x,y\}$ with $x \neq y$ and create the cycle:
	$$\begin{array}{lll}
		0x & \cdots & 1y \\
		1y & \cdots & 2x \\
		2x & \cdots & 0y \\
		0y & \cdots & 1x \\
		1x & \cdots & 2y \\
		2y & \cdots & 0x
	\end{array}$$ 
	When we have $b_2 = b_3 = x$ we have the shorter cycle:
	$$\begin{array}{lll}
		0x & \cdots & 1x \\
		1x & \cdots & 2x \\
		2x & \cdots & 0x
	\end{array}$$
	\\
	\item  We will combine blocks of type (4) with blocks of type (6). \\
	\item  We will combine blocks of type (5) with blocks of type (7). \\
	\item  Note that the first term and the third term in the block of type (4) are the same as those for the blocks of type (6).  Using this, we create short two-block cycles by fixing $\epsilon$ and connecting the blocks as shown:
	$$ij \cdots (i+2)(6\epsilon -2j+1) \cdots ij.$$
	\\
	\item  Note that the first term and the last term in the block of type (5) are the same as those for the blocks of type (7).  Using this, we create short two-block cycles by fixing $\epsilon$ and connecting the blocks as shown: $$ij \cdots (i+2)(6 \epsilon - 2j-2) \cdots ij.$$
	\\
	\item  For each choice of $\epsilon \in \{0,1\}$, we have two blocks that contain the points $ij, i(j+6)$ of type (8).  We connect these as shown to create short two-block cycles: $$ij \cdots i(j+6) \cdots ij.$$
	\\
	\item  We will combine these with the blocks of type (10). \\
	\item  Fix $g \in \{0, 1, 2, 3, 4, 5\}$, $\epsilon \in \{0, 1\}$ and fix $i \in \{0, 1, 2\}$.  Then we have two choices for $i' \in \{0, 1, 2\} \setminus \{i\}$, so we have narrowed our consideration to 2 blocks of type (9) and 2 blocks of type (10).  In any order, we list them in a 1-overlap cycle as: $$i(2g+3\epsilon) \cdots i(2g+6+3\epsilon) \cdots i(2g+3 \epsilon) \cdots i (2g+6+3 \epsilon) \cdots i(2g+3\epsilon )$$
	\\
	\item  Fix $i,j$.  This restricts us to 4 blocks - one for each $e \in \{0, 1, 2, 3\}$.  Form a 1-overlap cycle as follows: $$ij \cdots i(j+1) \cdots ij \cdots i(j+1) \cdots ij.$$
	\\
	\item  Same as (11).
	\\
	\item  Same as (11).
	\\
	\item  Fix $i,i' \in \{0,1,2\}$ (distinct), and fix $\alpha \in \{4,5\}$.  Order the pairs in $P_\alpha(6)$ arbitrarily as $\{[r(1), s(1)], [r(2), s(2)], \ldots , [r(t), s(t)]\}$.  For each difference $d = 1, 2, \ldots , \lfloor t/2 \rfloor$, we construct the string $S_1$ as follows:
	$$\begin{array}{llll}
		i r(1) & i s(1) & i' s(1+d) & i'r(1+d) \\
		i'r(1+d) & i' s(1+d) & i s(1+2d) & i r(1+2d) \\
		i r(1+2d) & i s(1+2d) & i' s(1+3d) & i' r(1+3d) \\
		\vdots
	\end{array}$$
	Continue this string until we arrive at a block that ends with either $i r(1)$ or $i' r(1)$.  We construct $S_2$ from $S_1$ by swapping $i$ and $i'$ everywhere.  If $S_1$ ends in $ir(1)$, then $S_2$ ends in $i' r(1)$, and both are cycles.  If $S_1$ ends in $i' r(1)$, then $S_2$ ends in $i r(1)$ and we can connect $S_1$ and $S_2$ to make one cycle.  If $d$ divides $ t$, then we repeat the procedure, replacing $[r(1), s(1)]$ with a pair from $P_\alpha(6)$ that was not used in creating $S_1$ and $S_2$.
\end{enumerate}

To connect all of these cycles, we look to the blocks of type (2).  Fix $b_1$, and connect all of the short cycles corresponding to this $b_1$ together at the point $0b_1$.  Note that this cycle has as overlap points $0b_1$ and every point $1j$ for $j \in \{0, 1, 2, \ldots , 11\}$.  Do this for each choice of $b_1$, and then each of these cycles contains the point $10$ as an overlap, so they can all be connected.  From this, we have constructed one cycle that contains every point $ij$ with $i, j \in \{0, 1, 2, \ldots , 11\}$ as an overlap point.  All other cycles can be connected to this.
\\

\textbf{The general $\textbf{SQS}\mathbf{(n) \rightarrow SQS(12n-10)}$ construction:}  To construct an overlap cycle, we look at each type of block separately.
\begin{enumerate}
 \item Since we can construct a 1-ocycle on $\mathcal{B}(14)$, we can construct a 1-ocycle on $i \oplus \mathcal{B}(14)$ for each choice of $i$.
\\
\item  These blocks are completely determined by choosing $b_1, b_2 \in \{0, 1, \ldots , 11\}$ and $h \in \{0,1\}$.  If we fix $b_1, b_2$, then we can connect the two blocks identified as follows: $$ub_1 \cdots vb_2 \cdots ub_1.$$  We do this for each choice of $b_1$ and $b_2$ to make many short 2-block cycles.
\\
\item  These blocks have structure similar to their corresponding blocks from the SQS(38).  Therefore, we make cycles in exactly the same way for blocks of type (4) - (7) in the SQS(38).  For blocks of type (3) in the SQS(38), we can use the same structure, since only the last 2 points are used as overlaps.  In this SQS($12n-10$), this corresponds to the points $w\alpha_3, w\alpha_4$, and so we can create cycles in the same way.
\\
\item  We can make cycles in the same way as in the SQS(38) for blocks of type (8) - (14).
\end{enumerate}
To connect all of these cycles, we look to the blocks of type (2).  Fix $u, b_1$, and let everything else vary.  These many short cycles can all be connected at the point $ub_1$, and the cycle created contains every point as an overlap point, except $A0, A1$.  We can connect all other cycles to this one.
\end{proof}

Putting all of these constructions together with the base cases, we get the following result.
\begin{result}
	For all $n \equiv 2,4 \pmod 6$ with $n \geq 8$, there exists an SQS($n$) that admits a 1-ocycle.
\end{result}
\begin{proof}
	These constructions, together with Theorem \ref{Hanani} and the corresponding base cases, we have a 1-ocycle for each order.
\end{proof}

\section{The Construction of 2-Ocycles}

Several other constructions for Steiner quadruple systems exist.  In this section, we explore one of these constructions and find 2-ocycles corresponding to it.  Note that this is best possible for Steiner quadruple systems, as an overlap of size three is not possible.

\begin{defn} \emph{\cite{Cameron}, p. 121.} A \textit{tournament schedule} for $n$ teams is an arrangement of all pairs of teams into the minimum number of rounds ($n-1$ if $n$ is even, and $n$ if $n$ is odd).
\end{defn}

\begin{const}\label{TConst} \textbf{Tournament Construction}, \emph{\cite{Cameron}, p. 119. }\end{const}  
\begin{proof}[Construction]  For $n$ even, we can construct a tournament as follows.  List $n-1$ points evenly around a circle and add a point to the center.  We now have $n$ points.  Match the center point to one of the outer points, and create a matching on the remaining points by taking all pairs perpendicular to the center point's match.  This gives one tournament round.  Rotating the center point's match around the outer circle gives $n-1$ rounds.

For $n$ odd, we use the same construction as described above using $n+1$ points, but discard the center point after creating the matching.  This leaves the center point's match as the point that does not participate in the round.
\end{proof}

For example, when we have 6 points, we have the following rounds:

\setlength{\unitlength}{10mm}
\begin{picture}(2,3)
	\qbezier(0,0)(0,0)(0,2.2)
	\qbezier(0,0)(0,0)(2.2,0)
	\qbezier(2.2,0)(2.2,0)(2.2,2.2)
	\qbezier(0,2.2)(0,2.2)(2.2,2.2)
	\put(1.0,2.5){$R_1$}
	\put(1,1.07){\circle*{.1}}
	\put(1.3090,2.0211){\circle*{.1}}
	\put(0.1910,1.6578){\circle*{.1}}
	\put(0.1910,0.4822){\circle*{.1}}
	\put(1.3090,0.1189){\circle*{.1}}
	\put(2,1.07){\circle*{.1}}
	\qbezier(1,1.07)(2,1.07)(2,1.07)
	\qbezier(0.1910,1.6578)(0.1910,1.6578)(0.1910,0.4822)
	\qbezier(1.3090,2.0211)(1.3090,2.0211)(1.3090,0.1189)
\end{picture}
\hspace{10 mm}
\begin{picture}(2,3)
	\qbezier(0,0)(0,0)(0,2.2)
	\qbezier(0,0)(0,0)(2.2,0)
	\qbezier(2.2,0)(2.2,0)(2.2,2.2)
	\qbezier(0,2.2)(0,2.2)(2.2,2.2)
	\put(1.0,2.5){$R_2$}
	\put(1,1.07){\circle*{.1}}
	\put(1.3090,2.0211){\circle*{.1}}
	\put(0.1910,1.6578){\circle*{.1}}
	\put(0.1910,0.4822){\circle*{.1}}
	\put(1.3090,0.1189){\circle*{.1}}
	\put(2,1.07){\circle*{.1}}
	\qbezier(1,1.07)(1.3090,2.0211)(1.3090,2.0211)
	\qbezier(0.1910,1.6578)(0.1910,1.6578)(2,1.07)
	\qbezier(0.1910,0.4822)(0.1910,0.4822)(1.3090,0.1189)
\end{picture}
\hspace{10 mm}
\begin{picture}(2,3)
	\qbezier(0,0)(0,0)(0,2.2)
	\qbezier(0,0)(0,0)(2.2,0)
	\qbezier(2.2,0)(2.2,0)(2.2,2.2)
	\qbezier(0,2.2)(0,2.2)(2.2,2.2)
	\put(1.0,2.5){$R_3$}
	\put(1,1.07){\circle*{.1}}
	\put(1.3090,2.0211){\circle*{.1}}
	\put(0.1910,1.6578){\circle*{.1}}
	\put(0.1910,0.4822){\circle*{.1}}
	\put(1.3090,0.1189){\circle*{.1}}
	\put(2,1){\circle*{.1}}
	\qbezier(1,1.07)(0.1910,1.6578)(0.1910,1.6578)
	\qbezier(1.3090,2.0211)(1.3090,2.0211)(0.1910,0.4822)
	\qbezier(1.3090,0.1189)(1.3090,0.1189)(2,1.07)
\end{picture}
\hspace{10 mm}
\begin{picture}(2,3)
	\qbezier(0,0)(0,0)(0,2.2)
	\qbezier(0,0)(0,0)(2.2,0)
	\qbezier(2.2,0)(2.2,0)(2.2,2.2)
	\qbezier(0,2.2)(0,2.2)(2.2,2.2)
	\put(1.0,2.5){$R_4$}
	\put(1,1.07){\circle*{.1}}
	\put(1.3090,2.0211){\circle*{.1}}
	\put(0.1910,1.6578){\circle*{.1}}
	\put(0.1910,0.4822){\circle*{.1}}
	\put(1.3090,0.1189){\circle*{.1}}
	\put(2,1.07){\circle*{.1}}
	\qbezier(1,1.07)(0.1910,0.4822)(0.1910,0.4822)
	\qbezier(1.3090,0.1189)(1.3090,0.1189)(0.1910,1.6578)
	\qbezier(1.3090,2.0211)(1.3090,2.0211)(2,1.07)
\end{picture}
\hspace{10 mm}
\begin{picture}(2,3)
	\qbezier(0,0)(0,0)(0,2.2)
	\qbezier(0,0)(0,0)(2.2,0)
	\qbezier(2.2,0)(2.2,0)(2.2,2.2)
	\qbezier(0,2.2)(0,2.2)(2.2,2.2)
	\put(1.0,2.5){$R_5$}
	\put(1,1.07){\circle*{.1}}
	\put(1.3090,2.0211){\circle*{.1}}
	\put(0.1910,1.6578){\circle*{.1}}
	\put(0.1910,0.4822){\circle*{.1}}
	\put(1.3090,0.1189){\circle*{.1}}
	\put(2,1.07){\circle*{.1}}
	\qbezier(1,1.07)(1.3090,0.1189)(1.3090,0.1189)
	\qbezier(1.3090,2.0211)(1.3090,2.0211)(0.1910,1.6578)
	\qbezier(0.1910,0.4822)(0.1910,0.4822)(2,1.07)
\end{picture}
\\
\newpage
When we have 5 points, we have the following rounds:

\setlength{\unitlength}{10mm}
\begin{picture}(2,3)
	\qbezier(0,0)(0,0)(0,2.2)
	\qbezier(0,0)(0,0)(2.2,0)
	\qbezier(2.2,0)(2.2,0)(2.2,2.2)
	\qbezier(0,2.2)(0,2.2)(2.2,2.2)
	\put(1.0,2.5){$R_1$}
	\put(1.3090,2.0211){\circle*{.1}}
	\put(0.1910,1.6578){\circle*{.1}}
	\put(0.1910,0.4822){\circle*{.1}}
	\put(1.3090,0.1189){\circle*{.1}}
	\put(2,1.07){\circle*{.1}}
	\qbezier(0.1910,1.6578)(0.1910,1.6578)(0.1910,0.4822)
	\qbezier(1.3090,2.0211)(1.3090,2.0211)(1.3090,0.1189)
\end{picture}
\hspace{10 mm}
\begin{picture}(2,3)
	\qbezier(0,0)(0,0)(0,2.2)
	\qbezier(0,0)(0,0)(2.2,0)
	\qbezier(2.2,0)(2.2,0)(2.2,2.2)
	\qbezier(0,2.2)(0,2.2)(2.2,2.2)
	\put(1.0,2.5){$R_2$}
	\put(1.3090,2.0211){\circle*{.1}}
	\put(0.1910,1.6578){\circle*{.1}}
	\put(0.1910,0.4822){\circle*{.1}}
	\put(1.3090,0.1189){\circle*{.1}}
	\put(2,1.07){\circle*{.1}}
	\qbezier(0.1910,1.6578)(0.1910,1.6578)(2,1.07)
	\qbezier(0.1910,0.4822)(0.1910,0.4822)(1.3090,0.1189)
\end{picture}
\hspace{10 mm}
\begin{picture}(2,3)
	\qbezier(0,0)(0,0)(0,2.2)
	\qbezier(0,0)(0,0)(2.2,0)
	\qbezier(2.2,0)(2.2,0)(2.2,2.2)
	\qbezier(0,2.2)(0,2.2)(2.2,2.2)
	\put(1.0,2.5){$R_3$}
	\put(1.3090,2.0211){\circle*{.1}}
	\put(0.1910,1.6578){\circle*{.1}}
	\put(0.1910,0.4822){\circle*{.1}}
	\put(1.3090,0.1189){\circle*{.1}}
	\put(2,1){\circle*{.1}}
	\qbezier(1.3090,2.0211)(1.3090,2.0211)(0.1910,0.4822)
	\qbezier(1.3090,0.1189)(1.3090,0.1189)(2,1.07)
\end{picture}
\hspace{10 mm}
\begin{picture}(2,3)
	\qbezier(0,0)(0,0)(0,2.2)
	\qbezier(0,0)(0,0)(2.2,0)
	\qbezier(2.2,0)(2.2,0)(2.2,2.2)
	\qbezier(0,2.2)(0,2.2)(2.2,2.2)
	\put(1.0,2.5){$R_4$}
	\put(1.3090,2.0211){\circle*{.1}}
	\put(0.1910,1.6578){\circle*{.1}}
	\put(0.1910,0.4822){\circle*{.1}}
	\put(1.3090,0.1189){\circle*{.1}}
	\put(2,1.07){\circle*{.1}}
	\qbezier(1.3090,0.1189)(1.3090,0.1189)(0.1910,1.6578)
	\qbezier(1.3090,2.0211)(1.3090,2.0211)(2,1.07)
\end{picture}
\hspace{10 mm}
\begin{picture}(2,3)
	\qbezier(0,0)(0,0)(0,2.2)
	\qbezier(0,0)(0,0)(2.2,0)
	\qbezier(2.2,0)(2.2,0)(2.2,2.2)
	\qbezier(0,2.2)(0,2.2)(2.2,2.2)
	\put(1.0,2.5){$R_5$}
	\put(1.3090,2.0211){\circle*{.1}}
	\put(0.1910,1.6578){\circle*{.1}}
	\put(0.1910,0.4822){\circle*{.1}}
	\put(1.3090,0.1189){\circle*{.1}}
	\put(2,1.07){\circle*{.1}}
	\qbezier(1.3090,2.0211)(1.3090,2.0211)(0.1910,1.6578)
	\qbezier(0.1910,0.4822)(0.1910,0.4822)(2,1.07)
\end{picture}
\\
\\
\begin{const}\label{nTO2n}  \textbf{Recursive SQS Construction} \emph{\cite{Cameron}, p. 121.} \end{const}  
 Let $(X, \mathcal{B})$ be a SQS of order $n \geq 2$.  Take a disjoint copy of $(X', \mathcal{B}')$ of this system.  Take a tournament schedule on $X$ with rounds $R_1, \ldots , R_{n-1}$, and one on $X'$ with rounds $R'_1, \ldots , R'_{n-1}$.  This is possible since $n \equiv 2, 4 \pmod 6$, which implies that $n$ is even.  Define:
\begin{eqnarray*}
	Y & = & X \cup X' \\
	\mathcal{C} & = & \mathcal{B} \cup \mathcal{B}' \cup \mathcal{R}
\end{eqnarray*}	
where $\mathcal{R}$ contains all sets $\{x, y, z', w'\}$ such that $x,y \in X$ and $z',w' \in X'$, and there exists $i \in [n-1]$ such that $\{x,y\} \in R_i$ and $\{z', w'\} \in R'_i$.  Then $(Y, \mathcal{C})$ is an SQS of order $2n$.

\begin{thm}\label{ProvenTO2n}
	Construction \ref{nTO2n} produces an SQS$(2n)$.
\end{thm}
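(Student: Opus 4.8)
The plan is to verify the defining property of a Steiner quadruple system directly: that every $3$-element subset of $Y = X \cup X'$ is contained in exactly one block of $\mathcal{C}$. Since $|Y| = 2n$, this is precisely what is needed. Before splitting into cases, I would first record the one property of tournament schedules that the whole argument rests on: for $n$ even, such a schedule is a $1$-factorization of the complete graph on the $n$ points. Each round $R_i$ is a perfect matching, and, by counting $n-1$ rounds each containing $n/2$ disjoint pairs for a total of $n(n-1)/2$ pairs, every unordered pair of points lies in exactly one round. The same holds for the rounds $R'_1, \ldots, R'_{n-1}$ on $X'$.

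I would then classify each triple $T \subseteq Y$ according to how many of its points lie in $X$ as opposed to $X'$. There are four cases, and the $X \leftrightarrow X'$ symmetry of the construction collapses them to two. \emph{All three points in $X$:} a block of $\mathcal{B}'$ lies entirely in $X'$, and a block of $\mathcal{R}$ meets $X$ in only two points, so neither type can contain $T$; thus $T$ is covered only by blocks of $\mathcal{B}$, and since $(X, \mathcal{B})$ is itself an SQS, exactly one such block contains $T$. The case of all three points in $X'$ is identical with $\mathcal{B}'$ in place of $\mathcal{B}$.

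The substantive case is the mixed one, say $T = \{p, q, r'\}$ with $p, q \in X$ and $r' \in X'$ (two points in $X'$ being symmetric). Blocks of $\mathcal{B}$ and $\mathcal{B}'$ are excluded, since they lie on a single side, so $T$ must be covered by a block $\{x, y, z', w'\} \in \mathcal{R}$. As such a block has exactly two points in $X$, we are forced to have $\{x, y\} = \{p, q\}$, and we need $r' \in \{z', w'\}$. By the $1$-factorization fact the pair $\{p, q\}$ determines a unique round index $i$ with $\{p, q\} \in R_i$; in the perfect matching $R'_i$ the point $r'$ has a unique partner $w'$, and then $\{p, q, r', w'\}$ is the unique block of $\mathcal{R}$, and hence of $\mathcal{C}$, containing $T$. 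This step is the crux, as it simultaneously uses the partition structure (to force $i$, giving uniqueness) and the matching structure (to produce $w'$, giving existence).

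Once all four triple types are dispatched, every $3$-subset of $Y$ lies in exactly one block, so $(Y, \mathcal{C})$ is an SQS$(2n)$. I expect no deep obstacle here; the only care required is the bookkeeping in the mixed case, namely confirming that the two $X$-points of an $\mathcal{R}$-block are forced to be $\{p, q\}$ and that the round is uniquely determined. As an independent sanity check one can count $|\mathcal{C}| = |\mathcal{B}| + |\mathcal{B}'| + |\mathcal{R}|$ and confirm it equals the number of blocks of an SQS$(2n)$, namely one quarter of the number of triples of $Y$ (each block accounting for four triples).
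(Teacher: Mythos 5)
Your proof is correct, but it reaches the conclusion by a genuinely different route than the paper. The paper only establishes that every triple of $Y$ lies in \emph{some} block (its part (a) is essentially your existence argument in the mixed case), and then obtains uniqueness indirectly by counting: it computes $|\mathcal{B}|+|\mathcal{B}'|+|\mathcal{R}| = \frac{n(n-1)(n-2)}{12}+\frac{n^2(n-1)}{4} = \frac{2n(2n-1)(2n-2)}{24}$, which is exactly $\frac{{2n \choose 3}}{{4 \choose 3}}$, so the $4|\mathcal{C}|$ triples covered (with multiplicity) exactly exhaust the ${2n \choose 3}$ triples of $Y$ and each must therefore be covered exactly once. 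You instead prove uniqueness directly, from the observation that a tournament schedule on an even number of points is a one-factorization: each pair lies in exactly one round and each round is a perfect matching, so in the mixed case the two $X$-points of an $\mathcal{R}$-block are forced to equal $\{p,q\}$, the round index $i$ is then determined, and $r'$ has a unique partner in $R'_i$. Both arguments are sound. Yours dispenses with the block count (relegating it to a sanity check) at the price of invoking the uniqueness half of the one-factorization property, which the paper uses only for existence; it is arguably the more self-contained version, since the paper's route silently relies on the reader supplying the standard inference that full coverage plus the correct block count forces exact coverage.
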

\begin{proof}  The total number of blocks in an SQS($n$) is given by $$\frac{{n \choose 3}}{{4 \choose 3}} = \frac{n(n-1)(n-2)}{24}.$$  To show that this construction produces an SQS($2n$), we need only show two things:  (a)  every triple appears in some block, and (b)  we have the correct number of blocks, i.e. $\frac{2n(2n-1)(2n-2)}{24}$ blocks.

For (a), we consider two cases.  First, consider a triple $\{a,b,c\}$ with either $\{a,b,c\} \subset X$ or $\{a,b,c\} \subset X'$.  Then this triple is contained in a block in either $(X, \mathcal{B})$ or $(X', \mathcal{B}')$.  

Next, consider a triple $\{a, b', c'\}$ with $a \in X$ and $b',c' \in X'$.  Then we locate the tournament round $R'_i$ in which teams $b'$ and $c'$ are paired.  Then there is some $z \in X$ such that $\{a,z\}$ are paired in round $R_i$.  The construction then produces the block $\{a,z,b',c'\}$.  The same argument holds for triples $\{a',b,c\}$ with $a' \in X'$ and $b,c \in X$.

Finally, for (b) we count the total number of blocks.  Our construction uses all of the blocks from $(X, \mathcal{B})$ and $(X', \mathcal{B}')$, the sum of which is \begin{equation} \frac{n(n-1)(n-2)}{12}.\end{equation}  Then we must count all of the blocks created from the tournament.  For each $i \in [n-1]$, we have all combinations of pairs from $R_i \times R'_i$, which gives us $ \frac{n^2}{4} $ different blocks.  Since there are $n-1$ choices for $i$, the total number of blocks created using the tournament is: \begin{equation} \frac{n^2(n-1)}{4}. \end{equation}  Summing (1) and (2), we have that the total number of blocks in our SQS($2n$) is:
\begin{eqnarray*}
	\frac{n(n-1)(n-2)}{12} + \frac{n^2(n-1)}{4}
	& = & \frac{2n(n-1)(4n-2)}{24} \\
	& = & \frac{2n(2n-1)(2n-2)}{24}
\end{eqnarray*}
Thus we have the correct number of blocks, and so $(Y, \mathcal{C})$ is indeed an SQS($2n$).
\end{proof}

We now prove that we can use this construction to find 2-ocycles.

\begin{result}\label{2OC} Let $(X, \mathcal{B})$ be a SQS of order $n \geq 2$ that admits a 2-ocycle.  Then there exists a SQS of order $2n$ that admits a 2-ocycle if $n \equiv 0 \pmod 4$ and $n > 13$.
\end{result}

\begin{proof} Let $O$ be the 2-ocycle for $(X, \mathcal{B})$, and let $O'$ be the 2-ocycle for the disjoint copy of $(X, \mathcal{B})$ created in Construction \ref{nTO2n}.  We identify the points from $X$ with $\mathbb{Z}/n \mathbb{Z}$.  

The blocks from $\mathcal{R}$ are partitioned into $n-1$ classes:  $R_1\times R'_1, R_2\times R'_2, \ldots, R_{n-1} \times R'_{n-1}$.  Each of these $R_i$s and $R'_i$s consists of $\frac{n}{2}$ disjoint pairs of points.  For each $i \in [n-1]$, we construct a complete bipartite graph $G_i$ with parts $R_i$ and $R'_i$.  More specifically, each vertex in this graph represents one of the pairs of points.  Then, each edge in $G_i$ represents a quadruple from $\mathcal{R}$.  More importantly, an Euler tour in this graph corresponds to a 1-ocycle $C$ on pairs of pairs of points (one pair from $R_i$ and one pair from $R'_i$), which is a 2-overlap cycle on the quadruples from $R_i \times R'_i \subset \mathcal{R}$.  Since $G_i$ is clearly connected, such an Euler tour will exist if and only if $\frac{n}{2}$ is even.

Finally, we need only show that all of these cycles can be connected.  Fix a pair of points $\{x,y\} \in X$, and suppose that $\{x,y\} \in R_i$.  Let $B \in \mathcal{B}$ be a block containing $\{x,y\}$, say $\{x,y,z,w\}$.  In $O$, we must have (without loss of generality) either $\{x,y\}$ or $\{y,z\}$ as an overlap pair.  We must have $\{y,z\} \in R_j$ for some $j \in [n-1]$.  Depending on whether $\{x,y\}$ or $\{y,z\}$ is an overlap pair, we can attach one of the two smaller 2-overlap cycles, $R_i \times R'_i$ or $R_j \times R'_j$.  Repeating this idea, we may attach all but at most one of our cycles from $\mathcal{R}$, if one of the smaller 2-overlap cycles is repeatedly not attached.  So all that remains is to connect this one remaining cycle from $\mathcal{R}$;  suppose that this cycle corresponds to $R_k \times R'_k$.

Suppose for a contradiction that no overlap pair in $O$ is a pair from $R_k$.  Then we must have that all overlap pairs are points that compete in some round other than $R_k$.  Note that every pair must compete in some round.  The total number of overlap pairs in $O$ is equal to the number of blocks in $(X, \mathcal{B})$, or $\frac{n(n-1)(n-2)}{24}$.  The total number of pairs per round is $\frac{n}{2}$, and we may select pairs from any round except for $R_k$, giving us a total pool of $\frac{n(n-2)}{2}$ pairs.  Note that:
\begin{eqnarray*}
	13 & < & n \\
	12 & < & n-1 \\
	\frac{n(n-2)}{2} & < & \frac{n(n-1)(n-2)}{24}
\end{eqnarray*}
Thus the number of overlap pairs in $O$ exceeds our number of choices of pairs when we eliminate $R_k$, and so some overlap pair in $O$ must be represented as a pair in $R_k$.  Thus we can attach the 2-overlap cycle for round $R_k$ to our existing cycle.

Repeating the same argument for $O'$ allows us to attach $O'$ to at least one cycle of type $R_i \times R'_i$.  Thus we obtain a 2-overlap cycle covering $(Y, \mathcal{C})$.  
\end{proof}

\section{Future Work}

\setcounter{equation}{0}

Result \ref{2OC} shows a 2-ocycle for an SQS($2n$), given a 2-ocycle for an SQS($n$).  Can we find 2-ocycles that correspond to Hanani's SQS constructions?

\end{document}